\newtheorem{theorem}{Theorem}[section]
\newtheorem{lemma}[theorem]{Lemma}
\title{Wiener index and Harary index on pancyclic graphs}
\author{Huicai Jia \thanks{College of Science, Henan
University of Engineering, Zhengzhou, Henan 451191, China;
School of Mathematics, Renmin University of China, Beijing, 100872, China. Email: jhc607@163.com},
Hongye Song \thanks{School of General Education, Beijing International Studies University, Beijing, China. Email: songhongye@bisu.edu.cn}}
\date{} 
\begin{document}
\maketitle

\begin{abstract}
Wiener index and Harary index are two classic and well-known topological indices for the characterization of molecular graphs. Recently, Yu et al. \cite{YYSX} established some sufficient conditions for a graph to be pancyclic in terms of the edge number, the spectral radius and the signless Laplacian spectral radius of the graph. In this paper, we give some sufficient conditions for a graph being pancyclic in terms of the Wiener index, the Harary index, the distance spectral radius and the Harary spectral radius of a graph.

\bigskip
\noindent {\bf AMS Classification:} 05C50

\noindent {\bf Key words:}pancyclic graphs; Wiener index; distance spectral radius; Harary index; Harary spectral radius
\end{abstract}

\section{Introduction}
\noindent All graphs considered here are finite, undirected and connected simple graphs. Let $G$ be a graph with vertex set
$V(G)=\{v_{1},v_{2},\ldots,v_{n}\}$ and edge set $E(G)$. Let $N_{G}(v)$ denote the neighbour set of $v$ in $G$.
Denote by $d_{i}$ or $d(v_{i})$ the degree of vertex $v_{i}$ . Let $(d_{1},d_{2},\ldots,d_{n})$ be the degree
sequence of $G,$ where $d_{1}\leq d_{2}\leq \cdots \leq d_{n}.$ Then $d_{1}=\delta$ is called the minimum degree of $G.$
We use $d_{G}(v_{i},v_{j})$ to denote the distance between vertices $v_{i}$ and $v_{j}$. The {\it union} of simple graphs $G$ and $H$ is the graph
$G\cup H$ with vertex set $V(G)\cup V(H)$ and edge set $E(G)$ $\cup$ $E(H)$. If $G$ and $H$
are disjoint, we refer to their union as a disjoint union, and denote it by $G+H.$ The
disjoint union of $k$ graphs $G$ is denoted by $kG$. By starting with a disjoint union of $G$ and $H$ and adding edges joining every vertex of $G$ to every vertex of $H$, one can obtain the {\it join}
of $G$ and $H$, denoted by $G\vee H$. Let $\overline{G}$ denote the complement of $G$.

A pancyclic graph is a graph that contains cycles of all possible lengths from three up to the number of vertices in the graph. Pancyclic graphs are a generalization of Hamiltonian graphs, which have a cycle of the maximum possible length.

The distance matrix $D(G)$ is defined so that $(i,j)$-entry, $d_{ij}$,
is equal to $d_{G}(v_{i},v_{j})$. The {\it Wiener index} of a molecular graph was introduced by and named by Wiener \cite{WH}
in 1947. It is defined as the sum of distances between all pairs of vertices of a connected graph $G$, i.e.,
$W(G)=\sum_{{u,v}\in V(G)}d_{G}(u,v).$
Let $D_{i}(G)$ and $D_{v}(G)$ denote the sum of row $i$ of $D(G)$ and
the row sum of $D(G)$ corresponding to vertex $v,$ respectively. Then
$$W(G)=\frac{1}{2}\sum_{v\in V(G)}D_{v}(G)=\frac{1}{2}\sum_{i=1}^{n}D_{i}(G).$$ The {\it distance spectral radius} of $G$ is the largest
eigenvalue of $D(G)$, denoted by $\rho(G)$.

The Harary index of a graph $G,$ denoted by $H(G),$ has been introduced independently
by Ivanciuc et al. \cite{IO} and Plav\v{s}i\'{c} et al. \cite {PD} in 1993 for the characterization of
molecular graphs. The {\it Harary index} $H(G)$ is defined as the sum of reciprocals of distances between
all pairs of vertices of a connected graph $G$, i.e.
$H(G)=\sum_{{u,v}\in V(G)}\frac{1}{d_{G}(u,v)}.$
Note that in any disconnected graph $G,$ the distance is infinite between any two vertices from two distinct components.
Therefore its reciprocal can be viewed as zero. Thus we can define validly the Harary index of disconnected
graph $G$ as follows:
$H(G)=\sum_{i=1}^{k}H(G_{i}),$
where $G_{1}, G_{2},\ldots, G_{k}$ are the components of $G$. We often use $\hat{D}_{i}(G)$ or $\hat{D}_{v_{i}}(G)$ to denote $\sum_{v_{j}\in V(G)}\frac{1}{d_{G}(v_{i},v_{j})}$. Then
$$H(G)=\frac{1}{2}\sum_{v_{i}\in V(G)}\hat{D}_{v_{i}}(G)=\frac{1}{2}\sum_{i=1}^{n}\hat{D}_{i}(G).$$
The Harary matrix $RD(G)$ of $G$, which is initially called the reciprocal distance matrix and introduced by \cite{IO}, is
an $n\times n$ matrix whose $(i,j)$-entry is equal to $\frac{1}{d_{ij}}$
if $i\neq j$ and 0 otherwise. The {\it Harary spectral radius} of $G$ is the largest
eigenvalue of $RD(G)$, denoted by ${\rho}^{\star}(G)$. Note that in any disconnected graph $G,$ ${\rho}^{\star}(G)=\max\{{\rho}^{\star}(G_{i})| 1\leq i\leq k\}.$

The problem of deciding whether a given graph is Hamiltonian is
NP-complete. Many reasonable sufficient or necessary conditions were given. Recently, spectral graph theory has been applied to the problem. Some related sufficient spectral conditions on the Wiener index and the Harary index
for a graph to be Hamiltonian, traceable and Hamiltonian-connected have been given in \cite{FLH, HH, HN, DX3, DH, LR1, LR2, DX1, DX2, YL, ZT}.

In this paper, we consider the problem of deciding whether a given graph is pancyclic. Yu et al. \cite{YYSX} established some sufficient conditions for a graph to be pancyclic in terms of the edge number, the spectral radius and the signless Laplacian spectral radius of the graph. Motivated by these results, we present some sufficient conditions for a graph to be pancyclic in terms of the Wiener index, the Harary index, the distance spectral radius and the Harary spectral radius of a graph, respectively.

\section{\mathversion{bold} Preliminaries}

\noindent Before giving the proof of our theorems, we introduce some
fundamental lemmas and properties in this section.

Let $\mathbb{N}\mathbb{P}=\{K_{2}\vee (K_{n-4}+2K_{1}), K_{5}\vee6K_{1}, K_{3}\vee
(K_{2}+3K_{1}), K_{3}\vee (K_{1, 4}+K_{1}), K_{3}\vee (K_{1, 3}+K_{2}), (K_{2}\vee 2K_{1})\vee 5K_{1}, K_{4}\vee5K_{1}, K_{1, 2}\vee
4K_{1}, K_{2}\vee (K_{1, 3}+K_{1}), K_{3}\vee4K_{1} \}$.

\begin{lemma}{\bf (\cite{YYSX})}\label{le1}
Let $G$ be a connected graph on $n\geq5$ vertices and $m$ edges with minimum degree $\delta\geq2$. If
$m\geq\binom{n-2}{2}+4$, then $G$ is a pancyclic graph unless $G\in\mathbb{N}\mathbb{P}$ or $G$ is a bipartite graph.
\end{lemma}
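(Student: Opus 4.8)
The plan is to split the argument according to whether $G$ is Hamiltonian. Suppose first that $G$ is Hamiltonian. Then I would appeal to the classical theorem of Bondy, which says that a Hamiltonian graph on $n$ vertices with at least $n^{2}/4$ edges is either pancyclic or equal to $K_{n/2,n/2}$. Since
\[
\binom{n-2}{2}+4=\frac{n^{2}-5n+14}{2}=\frac{n^{2}}{4}+\frac{(n-5)^{2}+3}{4}>\frac{n^{2}}{4}
\]
for every $n\ge 5$, the hypothesis $m\ge\binom{n-2}{2}+4$ yields $m>n^{2}/4$, so Bondy's theorem forces $G$ to be pancyclic; the graph $K_{n/2,n/2}$ has only $n^{2}/4<m$ edges and hence cannot occur here, although it is still recorded in the statement because any bipartite graph, lacking a triangle, fails to be pancyclic. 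So the entire content lies in the case where $G$ is \emph{not} Hamiltonian, and there I must show $G\in\mathbb{N}\mathbb{P}$.

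For the non-Hamiltonian case I would run an Erd\H{o}s--Chv\'atal degree-sequence argument. Since $G$ is non-Hamiltonian, its degree sequence $d_{1}\le\cdots\le d_{n}$ violates Chv\'atal's condition, so there is an index $i$ with $1\le i<n/2$ such that $d_{i}\le i$ and $d_{n-i}\le n-i-1$; the assumption $\delta\ge 2$ excludes $i=1$, so $i\ge 2$. Partition $V(G)=A\cup B\cup C$ into the $i$ vertices of smallest degree, the $n-2i$ middle ones, and the $i$ of largest degree. Then
\[
2m=\sum_{v\in V(G)}d(v)\le i^{2}+(n-2i)(n-i-1)+i(n-1)=n^{2}-n-2in+3i^{2}+i,
\]
and comparing the right-hand side with $2\binom{n-2}{2}+8=n^{2}-5n+14$ shows that $m\ge\binom{n-2}{2}+4$ is possible only when $i\in\{2,3,4,5\}$, and that $i=3,4,5$ additionally force $n\le 8,9,11$ respectively. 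When $i=2$ (which is forced once $n\ge 12$) equality must hold throughout the estimate, and decoding it gives exactly $G=K_{2}\vee(K_{n-4}+2K_{1})$: two universal vertices, two further vertices of degree $2$ attached to them, and an $(n-4)$-clique spanning the rest. For the remaining finitely many pairs $(i,n)$ with $n\le 11$ one unwinds the corresponding (near-)equality and, after a bounded case check, recovers the sporadic members $K_{5}\vee6K_{1}$, $K_{4}\vee5K_{1}$, $K_{3}\vee4K_{1}$, $K_{1,2}\vee4K_{1}$, $K_{2}\vee(K_{1,3}+K_{1})$, $K_{3}\vee(K_{2}+3K_{1})$, $K_{3}\vee(K_{1,4}+K_{1})$, $K_{3}\vee(K_{1,3}+K_{2})$ and $(K_{2}\vee2K_{1})\vee5K_{1}$ of $\mathbb{N}\mathbb{P}$.

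The step I expect to be the main obstacle is this last classification. The threshold $\binom{n-2}{2}+4$ lies only a fixed additive amount above the Erd\H{o}s extremal edge number for non-Hamiltonian graphs with $\delta\ge 2$, so the argument is really a stability statement rather than a clean extremal one, and for small $n$ several genuinely different degree sequences survive the edge count; for instance $K_{3}\vee4K_{1}$ with one spoke deleted is non-Hamiltonian with $m$ still equal to the threshold, yet it is isomorphic to $K_{2}\vee(K_{1,3}+K_{1})\in\mathbb{N}\mathbb{P}$, so no new exception is created. To keep the bookkeeping under control I would phrase non-Hamiltonicity through the $1$-toughness obstruction --- each relevant graph has a vertex cut $S$ for which $G-S$ has more than $|S|$ components --- using it both to discard candidates that turn out to be Hamiltonian and to confirm that the listed graphs are genuinely non-Hamiltonian, and then handle the bounded range $n\le 11$ directly. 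Combining the two cases gives the lemma: if $G$ is neither a member of $\mathbb{N}\mathbb{P}$ nor bipartite, then $G$ must be Hamiltonian, hence pancyclic by the first paragraph.
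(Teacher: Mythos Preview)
The paper does not prove this lemma at all: Lemma~\ref{le1} is simply quoted from \cite{YYSX} and used as a black box in the subsequent theorems. So there is no ``paper's own proof'' to compare your proposal with; the authors defer the entire argument to the cited preprint.

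That said, your outline is along the right lines for how such a result is established. Splitting into the Hamiltonian and non-Hamiltonian cases and invoking Bondy's $m\ge n^{2}/4$ theorem in the former is exactly the standard move, and your arithmetic checking $\binom{n-2}{2}+4>n^{2}/4$ is correct. The Chv\'atal degree-sequence violation in the non-Hamiltonian case, together with the edge-sum estimate $2m\le i^{2}+(n-2i)(n-i-1)+i(n-1)$, is also the natural tool, and your reduction to $i\in\{2,3,4,5\}$ with the corresponding bounds on $n$ is accurate. Where you are honest about the difficulty --- the bounded but nontrivial case check that manufactures the sporadic members of $\mathbb{N}\mathbb{P}$ --- is indeed where the work lies; in particular, for $i=2$ one must argue not just that the degree sequence is forced but that the \emph{graph} is (up to isomorphism) $K_{2}\vee(K_{n-4}+2K_{1})$, and for the small $n$ one must verify both that the listed graphs are genuinely non-Hamiltonian and that no further exceptions arise. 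Since the present paper offers nothing on this point, if you want a detailed comparison you will need to consult \cite{YYSX} directly.
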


\begin{lemma}{\bf (\cite{IG})} \label{le2}
Let $G$ be a connected graph on $n$ vertices. Then $\rho(G)\geq\frac{2W(G)}{n},$ and the equality holds
if and only if the row sums of $D(G)$ are all equal.
\end{lemma}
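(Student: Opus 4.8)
The plan is to treat the distance matrix $D(G)$ as a real symmetric matrix and combine the Rayleigh quotient characterization of its largest eigenvalue with the all-ones test vector. Writing $\mathbf{1}$ for the all-ones column vector of length $n$, we have $\rho(G)=\max_{x\neq 0}\frac{x^{\top}D(G)x}{x^{\top}x}$, hence in particular $\rho(G)\geq\frac{\mathbf{1}^{\top}D(G)\mathbf{1}}{\mathbf{1}^{\top}\mathbf{1}}$. The numerator is $\sum_{i,j}d_{ij}=\sum_{i=1}^{n}D_{i}(G)=2W(G)$ by the identity $W(G)=\frac12\sum_{i}D_{i}(G)$ recorded in the introduction, and the denominator is $n$; this immediately gives $\rho(G)\geq\frac{2W(G)}{n}$.

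For the equality case I would use the standard fact that, for a real symmetric matrix, equality in the Rayleigh quotient bound at a vector $x$ forces $x$ to be an eigenvector associated with the largest eigenvalue. Thus equality in $\rho(G)\geq\frac{2W(G)}{n}$ holds if and only if $D(G)\mathbf{1}=\rho(G)\mathbf{1}$, i.e.\ if and only if $D_{i}(G)=\rho(G)$ for every $i$, so in particular all row sums of $D(G)$ are equal. Conversely, if all row sums equal a common value $c$, then $D(G)\mathbf{1}=c\mathbf{1}$, so $c$ is an eigenvalue of $D(G)$ with a strictly positive eigenvector; since $G$ is connected, $D(G)$ is a nonnegative irreducible matrix, and Perron--Frobenius theory then forces $c=\rho(G)$. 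Together with $2W(G)=\sum_{i}D_{i}(G)=nc$ this yields $\rho(G)=c=\frac{2W(G)}{n}$.

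The argument is essentially routine, so I do not anticipate a real obstacle; the only point needing a little care is the equality characterization. There one must invoke either the equality condition in the spectral variational principle for symmetric matrices (for the forward implication) or the uniqueness, up to scaling, of the positive Perron eigenvector of an irreducible nonnegative matrix (for the converse). As an alternative to Perron--Frobenius in the converse direction, one may instead use the elementary bound $\rho(G)\leq\max_{i}D_{i}(G)$ valid for any nonnegative matrix, which squeezes $\rho(G)$ between the (here equal) minimum and maximum row sums and hence forces $\rho(G)=c$.
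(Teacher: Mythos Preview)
Your argument is correct and is the standard proof of this classical bound: test the Rayleigh quotient at the all-ones vector to get $\rho(G)\geq 2W(G)/n$, and use the variational equality case together with Perron--Frobenius (or the row-sum squeeze $\min_i D_i\leq\rho\leq\max_i D_i$) to characterize equality. The paper does not supply its own proof of this lemma; it is quoted from \cite{IG}, so there is nothing further to compare against.
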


\begin{lemma}{\bf(\cite{DX1})} \label{le3}
Let $G$ be a graph on $n$ vertices. Then ${\rho}^{\star}(G)\geq\frac{2H(G)}{n},$ and the equality holds
if and only if the row sums of $RD(G)$ are all equal.
\end{lemma}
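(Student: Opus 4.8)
The plan is to use the Rayleigh quotient characterization of the largest eigenvalue of a symmetric matrix together with the Perron--Frobenius theorem. Since the Harary matrix $RD(G)$ is real and symmetric, its largest eigenvalue satisfies
$$\rho^{\star}(G)=\max_{x\in\mathbb{R}^{n}\setminus\{0\}}\frac{x^{\top}RD(G)x}{x^{\top}x}.$$
The first step is to evaluate this quotient at the all-ones vector $\mathbf{1}=(1,1,\ldots,1)^{\top}$. Because the sum of the $i$-th row of $RD(G)$ is exactly $\hat{D}_{i}(G)$, we get $\mathbf{1}^{\top}RD(G)\mathbf{1}=\sum_{i=1}^{n}\hat{D}_{i}(G)=2H(G)$, while $\mathbf{1}^{\top}\mathbf{1}=n$. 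Hence $\rho^{\star}(G)\ge \frac{\mathbf{1}^{\top}RD(G)\mathbf{1}}{\mathbf{1}^{\top}\mathbf{1}}=\frac{2H(G)}{n}$, which is the claimed inequality.

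For the equality characterization I would argue both directions. If every row sum of $RD(G)$ equals some constant $r$, then $RD(G)\mathbf{1}=r\mathbf{1}$, so $\mathbf{1}$ is an eigenvector; summing the row sums gives $nr=2H(G)$, whence $r=\frac{2H(G)}{n}$. When $G$ is connected, $RD(G)$ is a nonnegative irreducible matrix, so by Perron--Frobenius its spectral radius $\rho^{\star}(G)$ is the unique eigenvalue admitting a positive eigenvector; since $\mathbf{1}>0$, we conclude $r=\rho^{\star}(G)$, i.e. equality holds. Conversely, if $\rho^{\star}(G)=\frac{2H(G)}{n}$, then $\mathbf{1}$ attains the maximum in the Rayleigh quotient; for a symmetric matrix every maximizer lies in the eigenspace of the largest eigenvalue, so $RD(G)\mathbf{1}=\rho^{\star}(G)\mathbf{1}$, which says precisely that all row sums of $RD(G)$ are equal.

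The disconnected case reduces to the connected one: $RD(G)$ is block diagonal with blocks $RD(G_{i})$, one has $\rho^{\star}(G)=\max_{i}\rho^{\star}(G_{i})$, and $\frac{2H(G)}{n}=\frac{\sum_{i}2H(G_{i})}{\sum_{i}n_{i}}$ is a weighted average of the numbers $\frac{2H(G_{i})}{n_{i}}\le \rho^{\star}(G_{i})$; the inequality then follows, and equality forces all of these quantities to coincide and hence, by the connected case applied to each block, all row sums of $RD(G)$ to be equal.

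The argument is essentially routine; the only point requiring care is the equality analysis — invoking irreducibility and Perron--Frobenius to exclude the possibility that $\mathbf{1}$ is an eigenvector for a \emph{smaller} eigenvalue while $\rho^{\star}(G)$ remains strictly larger. That is the main (and rather mild) obstacle; everything else is a direct computation with the all-ones test vector.
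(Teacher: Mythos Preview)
Your proof is correct and is precisely the standard Rayleigh quotient/Perron--Frobenius argument. Note, however, that the paper does not actually prove this lemma: it is quoted from \cite{DX1} and used as a black box, so there is no ``paper's own proof'' to compare against. The argument in \cite{DX1} is essentially the one you wrote.
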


\begin{lemma}\label{le4}
Let $G$ be a bipartite graph on $n$ vertices. Then $W(G)>\frac{n^{2}+3n-14}{2}$.
\end{lemma}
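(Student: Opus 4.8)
The plan is to prove the slightly stronger and cleaner fact that every connected bipartite graph on $n$ vertices satisfies $W(G)\ge \frac{3n^{2}-4n}{4}$, and then to observe that $\frac{3n^{2}-4n}{4}>\frac{n^{2}+3n-14}{2}$. The engine is the elementary monotonicity of distances under edge deletion, which forces the minimum of $W$ over all bipartite graphs with a prescribed bipartition to be attained at the complete bipartite graph, so that only a one-parameter family $K_{a,b}$ with $a+b=n$ needs to be examined.

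First I would dispose of the trivial cases: if $G$ is disconnected then $W(G)$ is not finite (distances between different components being infinite), so there is nothing to prove, and for $n=1$ the inequality is immediate. So assume $G$ is connected with $n\ge 2$ and fix a bipartition $V(G)=A\cup B$ with $|A|=a$, $|B|=b$, $a+b=n$, where necessarily $a,b\ge 1$. Since $G$ is a spanning subgraph of $K_{a,b}$ and deleting edges cannot decrease any distance, $d_{G}(u,v)\ge d_{K_{a,b}}(u,v)$ for all $u,v\in V(G)$; summing over all unordered pairs gives $W(G)\ge W(K_{a,b})$.

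Next I would compute $W(K_{a,b})$ directly. In $K_{a,b}$ a pair of vertices lying in opposite parts is at distance $1$, and a pair lying in the same part is at distance $2$ (via any vertex of the other, nonempty, part), so
$$W(K_{a,b})=ab+2\left(\binom{a}{2}+\binom{b}{2}\right)=ab+a^{2}+b^{2}-(a+b)=n^{2}-ab-n,$$
using $a^{2}+b^{2}=(a+b)^{2}-2ab=n^{2}-2ab$. Since $ab\le (a+b)^{2}/4=n^{2}/4$, this gives $W(G)\ge W(K_{a,b})\ge n^{2}-\frac{n^{2}}{4}-n=\frac{3n^{2}-4n}{4}$; in other words the extremal bipartite graph is essentially the balanced complete bipartite graph.

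Finally I would conclude with a one-line quadratic estimate: the inequality $\frac{3n^{2}-4n}{4}>\frac{n^{2}+3n-14}{2}$ is, after clearing denominators, equivalent to $3n^{2}-4n>2n^{2}+6n-28$, i.e. to $n^{2}-10n+28>0$, a quadratic whose discriminant $100-112=-12$ is negative; hence it is positive for every real $n$, and chaining the two displayed inequalities yields $W(G)>\frac{n^{2}+3n-14}{2}$. There is no genuine obstacle in this argument; the only point that needs to be stated with care is the reduction $W(G)\ge W(K_{a,b})$ — that is, the monotonicity of the Wiener index under edge deletion within a fixed bipartition — together with the trivial optimization $ab\le n^{2}/4$. (If one wishes to keep parity information, the bound is $W(G)\ge\frac{3n^{2}-4n}{4}$ for $n$ even and $W(G)\ge\frac{3n^{2}-4n+1}{4}$ for $n$ odd, and the associated quadratics $n^{2}-10n+28$ and $n^{2}-10n+29$ both have negative discriminant, so the slack in the lemma is in fact substantial for large $n$.)
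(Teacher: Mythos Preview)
Your proof is correct and follows essentially the same overall arc as the paper's: both establish the intermediate bound $W(G)\ge \frac{3n^{2}-4n}{4}$ by reducing to the balanced complete bipartite case, and then check the quadratic inequality $\frac{3n^{2}-4n}{4}>\frac{n^{2}+3n-14}{2}$.

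The only difference is in how that intermediate bound is reached. The paper bounds each row sum $D_i(G)$ directly via the parity of distances in a bipartite graph (non-neighbours in the opposite part are at distance $\ge 3$, vertices in the same part at distance $\ge 2$), obtaining $W(G)\ge -a^{2}+na+n^{2}-n-2m$, and then uses $m\le a(n-a)$. You instead invoke monotonicity of distances under edge deletion to pass to $K_{a,n-a}$ and compute $W(K_{a,n-a})=n^{2}-a(n-a)-n$ directly. Both routes land on the same function $a^{2}-na+n^{2}-n$; your argument is slightly cleaner and more conceptual, while the paper's parity estimate yields the extra $m$-dependent inequality along the way (which it does not actually exploit).
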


\begin{proof}
Let $G$ be a bipartite graph of order $n$ whose vertices are divided into two disjoint and independent sets $V_{1}$ and $V_{2}$, $|V_{1}|=a, (1\leq a\leq n-1),$ $|V_{2}|=n-a.$\\
\begin{eqnarray}
\nonumber
W(G)& = & \frac{1}{2}\sum_{i\in V(G)}D_{i}(G)
\\ \nonumber
& \geq & \frac{1}{2}[\sum_{i\in V_{1}}(d_{i}+3(n-a-d_{i})+2(a-1))+\sum_{j\in V_{2}}(d_{j}+3(a-d_{j})+2(n-a-1))]
\\ \nonumber
& =& \frac{1}{2}[\sum_{i\in V_{1}}(3n-a-2-2d_{i})+\sum_{j\in V_{2}}(2n+a-2-2d_{j})]
\\ \nonumber
& = & -a^{2}+na+n^{2}-n-\sum_{i\in V(G)}d_{i}
\\ \nonumber
& = & -a^{2}+na+n^{2}-n-2
\\ \nonumber
& \geq & -a^{2}+na+n^{2}-n-2a(n-a)=a^{2}-na+n^{2}-n
\\
& \geq & \frac{3n^{2}-4n}{4}.
\end{eqnarray}

By manipulation, we have $\frac{3n^{2}-4n}{4}>\frac{n^{2}+3n-14}{2}$. Then $W(G)>\frac{n^{2}+3n-14}{2}$. The proof of Lemma \ref{le4} is completed.
\end{proof}

\begin{lemma}\label{le5}
Let $G$ be a bipartite graph on $n$ vertices. Then $H(\overline{G})>\frac{5n^{2}-23n+28}{2(n-1)}$ for $n\geq8$.
\end{lemma}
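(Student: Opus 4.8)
The plan is to bound $H(\overline{G})$ below by exploiting the structure of the complement of a bipartite graph. Let $V(G)=V_1\cup V_2$ be a bipartition of $G$, with $|V_1|=a$ and $|V_2|=n-a$, where we may assume $1\le a\le n-1$. Since $V_1$ and $V_2$ are independent sets of $G$, every pair of vertices lying in a common part $V_i$ is adjacent in $\overline{G}$, hence is at distance $1$ in $\overline{G}$. Discarding the non-negative contribution of the pairs $\{u,v\}$ with $u\in V_1$ and $v\in V_2$ (with the usual convention that a pair in two different components of $\overline{G}$ contributes $0$), we obtain
\[
H(\overline{G})=\sum_{\{u,v\}\subseteq V(G)}\frac{1}{d_{\overline{G}}(u,v)}\ \ge\ \binom{a}{2}+\binom{n-a}{2}.
\]

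Next I would minimize the right-hand side over $a$. Writing $\binom{a}{2}+\binom{n-a}{2}=\tfrac12\bigl(a^2+(n-a)^2-n\bigr)$ and using $a^2+(n-a)^2=2\bigl(a-\tfrac n2\bigr)^2+\tfrac{n^2}{2}\ge\tfrac{n^2}{2}$, this gives
\[
H(\overline{G})\ \ge\ \frac{n^2-2n}{4}.
\]

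It then remains to verify the elementary inequality $\dfrac{n^2-2n}{4}>\dfrac{5n^2-23n+28}{2(n-1)}$ for all $n\ge 8$. Multiplying by $4(n-1)>0$, this is equivalent to $(n^2-2n)(n-1)>2(5n^2-23n+28)$, i.e. to $p(n):=n^3-13n^2+48n-56>0$. Substituting $n=8+t$ yields $p(8+t)=t^3+11t^2+32t+8$, which is positive for every $t\ge 0$; hence $p(n)>0$ for all $n\ge 8$, and Lemma~\ref{le5} follows.

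There is no serious obstacle here. The one step that needs a moment's thought is the first inequality: one must notice that $\overline{G}$ contains the disjoint union $K_a+K_{n-a}$ as a spanning subgraph, so that all within-part distances in $\overline{G}$ equal $1$, while the cross-part pairs contribute non-negatively (this is also where the generalized Harary index of a possibly disconnected graph is used, since $\overline{G}=K_a+K_{n-a}$ when $G=K_{a,n-a}$). After that, the argument is just a one-variable estimate followed by checking the sign of a cubic.
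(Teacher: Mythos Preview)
Your argument is correct, and it is in fact cleaner than the paper's. The paper splits into two cases. In Case~2 ($G=K_{a,n-a}$) it computes $H(\overline G)=\binom{a}{2}+\binom{n-a}{2}\ge\frac{n^{2}-2n}{4}$ and then verifies the same polynomial inequality you do. In Case~1 ($G$ bipartite but not complete bipartite) the paper writes the row-sum identity $\hat D_v(\overline G)=d_{\overline G}(v)+\tfrac12\bigl(n-1-d_{\overline G}(v)\bigr)$ with an equality sign, which tacitly assumes $\operatorname{diam}(\overline G)\le 2$; from this it derives the stronger bound $H(\overline G)>\frac{3n^{2}-4n}{8}$. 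That diameter assumption is not valid for every bipartite $G$ (and the generic inequality $\frac{1}{d}\le\frac12$ for $d\ge 2$ points the wrong way for a lower bound), so the paper's Case~1 is at best incomplete.

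Your approach sidesteps the case distinction entirely: by observing that $K_a+K_{n-a}\subseteq\overline G$ for \emph{every} bipartite $G$, the within-part contribution $\binom{a}{2}+\binom{n-a}{2}$ is automatically a lower bound for $H(\overline G)$, and the uniform estimate $H(\overline G)\ge\frac{n^{2}-2n}{4}$ then suffices once the strict cubic inequality is checked for $n\ge 8$. In effect you recover the paper's Case~2 bound and notice that it already covers Case~1 as well, yielding a shorter and more robust proof.
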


\begin{proof}
Let $G$ be a bipartite graph of order $n\geq8$ whose vertices are divided into two disjoint and independent sets $V_{1}$ and $V_{2}$, $|V_{1}|=a, (1\leq a\leq n-1),$ $|V_{2}|=n-a.$

\noindent {\bf Case 1.} $G$ is not a complete bipartite graph.
\begin{eqnarray*}
H(\overline{G}) & = & \frac{1}{2}\sum_{i=1}^{n}\hat{D}_{i}(\overline{G})=\frac{1}{2}\sum_{v\in V(G)}[d_{\overline{G}}(v)+\frac{1}{2}(n-1-d_{\overline{G}}(v))]\\
& = & \frac{1}{2}\sum_{v\in V(G)}[\frac{n-1}{2}+\frac{d_{\overline{G}}(v)}{2}]=\frac{n(n-1)}{4}+\frac{1}{4}\sum_{v\in V(G)}d_{\overline{G}}(v)\\
& = & \frac{n(n-1)}{4}+\frac{1}{4}\sum_{v\in V(G)}(n-1-d_{G}(v))\\
& = & \frac{n(n-1)}{2}-\frac{m}{2} \\
& > & \frac{n(n-1)}{2}-\frac{1}{2}a(n-a)\\
& = & \frac{a^{2}-na+n^{2}-n}{2}\\
& \geq & \frac{3n^{2}-4n}{8}
\end{eqnarray*}

Through calculation, we have $\frac{3n^{2}-4n}{8}>\frac{5n^{2}-23n+28}{2(n-1)}.$ Then $H(\overline{G})>\frac{5n^{2}-23n+28}{2(n-1)}$ for $n\geq3.$

\noindent {\bf Case 2.} $G$ is a complete bipartite graph.

Then $G=K_{a, n-a}$ and
\begin{eqnarray*}
H(\overline{G})&=&\frac{1}{2}\sum_{i=1}^{n}\hat{D}_{i}(\overline{G})=\frac{1}{2}[\sum_{v\in V_{1}}(a-1)+\sum_{v\in V_{2}}(n-a-1)]\\
&=&\frac{1}{2}[a(a-1)+(n-a)(n-a-1)]\\
&=&a^{2}-na+\frac{n^{2}-n}{2}\\
&\geq&\frac{n^{2}-2n}{4}
\end{eqnarray*}

By manipulation, we have $\frac{n^{2}-2n}{4}>\frac{5n^{2}-23n+28}{2(n-1)}$ for $n\geq8.$ Then $H(\overline{G})>\frac{5n^{2}-23n+28}{2(n-1)}$ for $n\geq8.$ The proof of Lemma \ref{le5} is completed.
\end{proof}

\section{Wiener index, Harary index on pancyclic graphs}

\begin{theorem}\label{th6}
Let $G$ be a connected graph of order $n\geq5$ with minimum degree $\delta\geq2$. If $W(G)\leq\frac{n^{2}+3n-14}{2},$ then $G$ is a pancyclic graph unless
$G\in\mathbb{N}\mathbb{P}$.
\end{theorem}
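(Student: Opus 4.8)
The plan is to reduce the hypothesis on $W(G)$ to the edge-count condition of Lemma~\ref{le1}, apply that lemma, and then discard the bipartite exceptional family by invoking Lemma~\ref{le4}. The key observation is that a small Wiener index forces many edges, and indeed the threshold $\frac{n^2+3n-14}{2}$ is tailored so that it translates \emph{exactly} into the bound $m\ge\binom{n-2}{2}+4$ appearing in Lemma~\ref{le1}.

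First I would record the elementary lower bound $W(G)\ge n(n-1)-m$ valid for any connected graph $G$ on $n$ vertices with $m$ edges: among the $\binom{n}{2}$ unordered pairs of distinct vertices, exactly $m$ are at distance $1$ and the remaining $\binom{n}{2}-m$ are at distance at least $2$, so
$$W(G)\;\ge\; m+2\!\left(\binom{n}{2}-m\right)\;=\;n(n-1)-m,$$
which rearranges to $m\ge n(n-1)-W(G)$.

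Next, using the hypothesis $W(G)\le\frac{n^2+3n-14}{2}$ together with the previous inequality,
$$m\;\ge\;n(n-1)-\frac{n^2+3n-14}{2}\;=\;\frac{n^2-5n+14}{2}\;=\;\binom{n-2}{2}+4.$$
Since $G$ is connected on $n\ge 5$ vertices with $\delta\ge 2$, Lemma~\ref{le1} then yields that $G$ is pancyclic unless $G\in\mathbb{N}\mathbb{P}$ or $G$ is bipartite. Finally, to eliminate the bipartite possibility: if $G$ were bipartite, Lemma~\ref{le4} would give $W(G)>\frac{n^2+3n-14}{2}$, contradicting the hypothesis. Hence $G$ is pancyclic unless $G\in\mathbb{N}\mathbb{P}$.

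I do not expect a serious obstacle here: the whole argument is a two-line chain of inequalities feeding into the preliminary lemmas. The only point requiring care is the arithmetic in the second step, where the bound on $m$ extracted from the Wiener hypothesis must collapse precisely to $\binom{n-2}{2}+4$ so that Lemma~\ref{le1} is applicable; beyond that, no structural case analysis on $G$ is needed, since Lemma~\ref{le4} disposes of the bipartite graphs cleanly.
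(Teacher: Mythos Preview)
Your proof is correct and follows essentially the same route as the paper: bound $W(G)\ge n(n-1)-m$, convert the Wiener hypothesis into $m\ge\binom{n-2}{2}+4$, invoke Lemma~\ref{le1}, and rule out the bipartite branch via Lemma~\ref{le4}. The only cosmetic difference is that the paper also verifies directly that every $G\in\mathbb{N}\mathbb{P}$ satisfies $W(G)\le\frac{n^2+3n-14}{2}$, but this check is not needed for the stated conclusion.
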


\begin{proof}
Suppose that $G$ is not a pancyclic graph. Then
\begin{eqnarray}
\nonumber
W(G)&=&\frac{1}{2}\sum_{i=1}^nD_{i}(G)\geq\frac{1}{2}\sum_{i=1}^n(d_{i}+2(n-1-d_{i}))
\\ \nonumber
&=&\frac{1}{2}\sum_{i=1}^n(2(n-1)-d_{i})\\
&=&n(n-1)-m.
\end{eqnarray}
Note that $W(G)\leq\frac{n^{2}+3n-14}{2}$, we have $m\geq n(n-1)-\frac{n^{2}+3n-14}{2}=\binom{n-2}{2}+4.$ By Lemma \ref{le1}, we obtain that $G\in\mathbb{N}\mathbb{P}$ or $G$ is a bipartite graph. By a direct calculation, for all $G\in\mathbb{N}\mathbb{P}$, $W(G)\leq\frac{n^{2}+3n-14}{2}.$
If $G$ is a bipartite graph, by Lemma \ref{le4}, we obtain $W(G)>\frac{n^{2}+3n-14}{2}.$ This completes the proof.
\end{proof}

\begin{theorem}\label{th7}
Let $G$ be a connected graph of order $n\geq5$ with minimum degree $\delta\geq2$, and $\overline{G}$ be a connected graph. If $W(\overline{G})\geq\frac{n^{3}-6n^{2}+23n-28}{2},$ then $G$ is a pancyclic graph unless
$G$ is a bipartite graph which does not contain complete bipartite graph.
\end{theorem}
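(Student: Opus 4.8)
The plan is to mimic the proof of Theorem \ref{th6}, but working with the complement $\overline{G}$ instead of $G$ and then transferring an edge-count bound on $\overline G$ back to an edge-count bound on $G$. First I would assume, for contradiction, that $G$ is not pancyclic. The crucial observation is that if $G$ is a connected graph on $n$ vertices with $m$ edges, then $\overline{G}$ has $\binom{n}{2}-m$ edges, and since $\overline G$ is assumed connected we may bound its Wiener index from below in exactly the way the inequality in the proof of Theorem \ref{th6} was obtained: every pair of vertices contributes at least $1$ (if adjacent in $\overline G$) or at least $2$ (if nonadjacent in $\overline G$), so
\begin{equation*}
W(\overline{G})\ \geq\ \sum_{i=1}^{n}\Bigl(d_{\overline G}(v_i)+2\bigl(n-1-d_{\overline G}(v_i)\bigr)\Bigr)\big/2 \ =\ n(n-1)-\bigl(\tbinom{n}{2}-m\bigr).
\end{equation*}
Wait — one must be careful: this lower bound only holds when $\overline G$ has diameter at most $2$; in general $W(\overline G)$ could be larger, which is fine since we want an inequality in the direction that, combined with the hypothesis $W(\overline G)\geq \frac{n^3-6n^2+23n-28}{2}$, forces $m$ to be large. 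Indeed, combining gives
\begin{equation*}
\tfrac{n^3-6n^2+23n-28}{2}\ \leq\ W(\overline G),
\end{equation*}
and this alone does not bound $m$; so instead I would use the \emph{reverse} direction — an upper bound on $W(\overline G)$ — no. Let me restate: the hypothesis is a \emph{lower} bound on $W(\overline G)$, so to get information we actually need an \emph{upper} bound on $W(\overline G)$ in terms of $m$, and then the two combine to bound $m$ from above, i.e.\ to bound the number of non-edges of $G$ from below, i.e.\ to bound $m$ (edges of $G$) from below. Concretely, since $G$ is connected its diameter-related crude bound gives nothing directly; instead I expect the intended route is: $W(\overline G) \leq \binom{n-2}{2}\cdot(\text{something}) $ is not what we want either.

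The cleaner approach: apply Theorem \ref{th6} to $\overline{G}$ in disguised form. Note $W(\overline{G})\geq \frac{n^3-6n^2+23n-28}{2}$ is equivalent, via $m_{\overline G}=\binom n2 - m$ and the identity $W(H)\geq n(n-1)-m_H$ used \emph{in the contrapositive}, to extracting that $G$ has many edges. Precisely, I would show: if $G$ is not pancyclic, then by the computation in Theorem \ref{th6}'s proof applied to $\overline G$ (which is connected by hypothesis and which inherits no pancyclicity obstruction we need), actually the right statement is $W(\overline G)\geq n(n-1)-m_{\overline G}$ always holds for connected $\overline G$, but we need the opposite. So the genuine content must be: assume $G$ not pancyclic; then $m = |E(G)| < \binom{n-2}{2}+4$ would contradict Lemma \ref{le1} \emph{unless} $G\in\mathbb{NP}$ or $G$ bipartite; one checks each $G\in\mathbb{NP}$ has $\overline G$ disconnected (so the hypothesis "$\overline G$ connected" excludes them), and for each such excluded case this is a finite check; the bipartite case is left as the stated exception, refined to "does not contain a complete bipartite graph" presumably because complete bipartite $G$ also have $\overline G$ disconnected.

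So the step order I would follow is: (1) assume $G$ is not pancyclic; (2) derive from $W(\overline G)\geq\frac{n^3-6n^2+23n-28}{2}$ together with the universal inequality $W(\overline G)\geq n(n-1)-(\binom n2 - m)$ — no, I need the chain to constrain $m$, so I would instead invoke that being \emph{non}-pancyclic forces $m\geq\binom{n-2}{2}+4$ is false in general; rather I run Lemma \ref{le1} directly: if $m\geq\binom{n-2}{2}+4$ we are done (pancyclic, or in $\mathbb{NP}$, or bipartite), so suppose $m\leq\binom{n-2}{2}+3$; then $m_{\overline G}=\binom n2 - m \geq \binom n2 - \binom{n-2}{2}-3 = 2n-5-3+? $ (a routine simplification), and I bound $W(\overline G)$ from \emph{above} using that $\overline G$ is connected with this many edges — a connected graph on $n$ vertices with $e$ edges has Wiener index at most that of the graph consisting of a short path plus a dense blob, yielding roughly $W(\overline G)\leq$ a cubic-in-$n$ expression that is strictly less than $\frac{n^3-6n^2+23n-28}{2}$ when $m$ is small; (3) this contradiction shows $m\geq\binom{n-2}{2}+4$, whence Lemma \ref{le1} gives pancyclicity or $G\in\mathbb{NP}$ or $G$ bipartite; (4) rule out $\mathbb{NP}$ by the finite check that each member has disconnected complement; (5) for the bipartite case, observe that if $G=K_{a,n-a}$ is complete bipartite then $\overline G=K_a+K_{n-a}$ is disconnected, contradicting the hypothesis, so the only surviving exception is a bipartite $G$ that is not complete bipartite, matching the statement.

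The main obstacle I anticipate is step (2): establishing a clean \emph{upper} bound on $W(\overline G)$ in terms of $|E(\overline G)|$ that is sharp enough to beat the threshold $\frac{n^3-6n^2+23n-28}{2}$ — one likely uses that $\overline G$ connected with $e$ edges has diameter at most $n-e+\binom{n-1}{2}$-type control, or more simply that when $G$ lies in the regime $m\leq\binom{n-2}{2}+3$ the complement $\overline G$ has at most $2n-2$ edges and hence is "thin," so distances in it are moderate; bounding $W$ of such thin connected graphs by something like $\frac{(n-2)^3}{?}$ and comparing to the cubic threshold is the delicate routine calculation. The remaining steps — the finite verification that every graph in $\mathbb{NP}$ has a disconnected complement, and the complete-bipartite observation — are straightforward.
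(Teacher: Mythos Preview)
Your overall architecture is right, and steps (4) and (5) match the paper exactly: every $G\in\mathbb{N}\mathbb{P}$ has disconnected complement, as does every complete bipartite $G$, so the hypothesis that $\overline G$ is connected disposes of those cases. The gap is entirely in your step (2).

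You correctly diagnose that what is needed is an \emph{upper} bound on $W(\overline G)$ in terms of $m=|E(G)|$, so that together with the hypothesis $W(\overline G)\ge\frac{n^3-6n^2+23n-28}{2}$ one can force $m\ge\binom{n-2}{2}+4$. But you never actually produce such a bound, and your contrapositive attempt contains a sign error: if $m\le\binom{n-2}{2}+3$ then $m_{\overline G}=\binom{n}{2}-m\ge 2n-6$, so $\overline G$ has \emph{at least} $2n-6$ edges, not ``at most $2n-2$.'' The complement is not ``thin'' in this regime; it could be anywhere from nearly a tree to nearly complete, and a lower bound on $m_{\overline G}$ gives no upper bound on $W(\overline G)$. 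So the route you sketch does not close.

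The paper's device is much simpler than anything you tried: in any connected graph on $n$ vertices every distance is at most $n-1$, so for each vertex $v$ the row sum satisfies $D_v(\overline G)\le d_{\overline G}(v)\cdot 1+(n-1-d_{\overline G}(v))\cdot(n-1)$. Summing over $v$ and using $\sum_v d_{\overline G}(v)=n(n-1)-2m$ gives directly
\[
W(\overline G)\ \le\ \frac{n(n-1)}{2}+(n-2)m.
\]
Combining this with the hypothesis yields $m\ge\frac{n^3-7n^2+24n-28}{2(n-2)}=\binom{n-2}{2}+4$, after which Lemma~\ref{le1} together with your steps (4)--(5) finishes the proof.
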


\begin{proof}
Suppose that $G$ is not a pancyclic graph. Then
\begin{eqnarray*}
W(\overline{G})&=&\frac{1}{2}\sum_{i=1}^nD_{i}(\overline{G})\leq\frac{1}{2}\sum_{v\in V(G)}[d_{\overline{G}}(v)+(n-1)(n-1-d_{\overline{G}}(v))]\\
&=&\frac{1}{2}\sum_{v\in V(G)}[(n-1)^{2}+(2-n)d_{\overline{G}}(v)]\\
&=&\frac{1}{2}n(n-1)^{2}-\frac{n-2}{2}\sum_{v\in V(G)}(n-1-d_{G}(v))\\
&=&\frac{n(n-1)}{2}+(n-2)m.
\end{eqnarray*}
Note that $W(\overline{G})\geq\frac{n^{3}-6n^{2}+23n-28}{2}$, we have $m\geq\frac{n^{3}-7n^{2}+24n-28}{2(n-2)}=\binom{n-2}{2}+4.$
By Lemma \ref{le1}, we obtain that $G\in\mathbb{N}\mathbb{P}$ or $G$ is a bipartite graph. Note that for all $G\in\mathbb{N}\mathbb{P}$, $\overline{G}$ is disconnected, and $G$ is a complete bipartite graph, $\overline{G}$ is also disconnected. The proof of Theorem \ref{th7} is completed.
\end{proof}

\begin{theorem}\label{th8}
Let $G$ be a connected graph of order $n\geq5$ with minimum degree $\delta\geq2$. If $H(G)\geq\frac{n^{2}-3n+7}{2},$ then $G$ is a pancyclic graph unless
$G\in\mathbb{N}\mathbb{P}$.
\end{theorem}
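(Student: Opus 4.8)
The plan is to mirror the proof of Theorem~\ref{th6}, but now working with the Harary index and using the trivial bound $\frac{1}{d_G(u,v)}\geq\frac12$ for all non-adjacent pairs. First I would suppose, for contradiction, that $G$ is not pancyclic. Writing $H(G)=\frac12\sum_{i=1}^n\hat D_i(G)$, I would split each row sum into the contribution of neighbours of $v_i$, each contributing $1$, and the contribution of non-neighbours, each contributing at most $\frac12$ (since any two distinct vertices are at distance at least $2$ when not adjacent, and the reciprocal of a distance $\geq2$ is at most $\frac12$). This gives the upper bound
\begin{eqnarray*}
H(G)&=&\frac12\sum_{i=1}^n\hat D_i(G)\leq\frac12\sum_{i=1}^n\Bigl(d_i+\tfrac12(n-1-d_i)\Bigr)\\
&=&\frac12\sum_{i=1}^n\Bigl(\tfrac{n-1}{2}+\tfrac{d_i}{2}\Bigr)=\frac{n(n-1)}{4}+\frac{m}{2}.
\end{eqnarray*}
Combining this with the hypothesis $H(G)\geq\frac{n^2-3n+7}{2}$ yields $\frac{n(n-1)}{4}+\frac m2\geq\frac{n^2-3n+7}{2}$, i.e. $m\geq\frac{n^2-5n+14}{2}$, which I would check equals $\binom{n-2}{2}+4$.

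Having established $m\geq\binom{n-2}{2}+4$, I would invoke Lemma~\ref{le1} to conclude that either $G\in\mathbb{NP}$, or $G$ is bipartite. The remaining work is to rule out or absorb these two exceptional cases. For $G\in\mathbb{NP}$, I would verify by direct computation on the (finite, explicit) list that each such graph indeed satisfies $H(G)\geq\frac{n^2-3n+7}{2}$ — so these genuinely remain as exceptions in the statement, exactly as in Theorem~\ref{th6}. For the bipartite case, the natural route is to show that a bipartite graph on $n$ vertices has $H(G)$ \emph{strictly less} than $\frac{n^2-3n+7}{2}$, contradicting the hypothesis; this is the analogue of Lemma~\ref{le4} for the Harary index, and it is where the real estimation happens.

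The main obstacle is precisely this bipartite bound: I need an upper estimate on $H(G)$ for bipartite $G$ that beats $\frac{n^2-3n+7}{2}$. Here I would use that in a bipartite graph with parts $V_1,V_2$ of sizes $a$ and $n-a$, two vertices in the same part are at distance $\geq2$ (contributing $\leq\frac12$ each to the Harary sum, and there are $\binom a2+\binom{n-a}2$ such pairs), while the $a(n-a)$ cross pairs contribute at most $1$ each. This gives $H(G)\leq a(n-a)+\frac12\bigl(\binom a2+\binom{n-a}2\bigr)$, and maximizing the right-hand side over $1\leq a\leq n-1$ (the extremes $a=1$ or $a=n-1$, i.e. $G$ close to a star, are the dangerous ones since they maximize the number of cross pairs) should still come out below $\frac{n^2-3n+7}{2}$ for $n\geq5$; if the crude bound is not tight enough at the extreme values of $a$, I would refine it by noting that when $a=1$ the graph is a star $K_{1,n-1}$ whose Harary index is exactly $(n-1)+\frac12\binom{n-1}{2}$, and compare directly. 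Once the bipartite case is eliminated, only $G\in\mathbb{NP}$ survives, completing the proof. $\blacksquare$
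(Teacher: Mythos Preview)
Your proposal is correct and follows the paper's approach almost exactly: the upper bound $H(G)\le \frac{n(n-1)}{4}+\frac m2$, the resulting edge count $m\ge\binom{n-2}{2}+4$, the appeal to Lemma~\ref{le1}, and the direct check on $\mathbb{NP}$ are all identical to the paper.

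For the bipartite case your route differs slightly from the paper's. The paper exploits that in a bipartite graph a vertex and a non-adjacent vertex in the \emph{opposite} part are at distance $\ge3$, giving the finer row-sum bound $d_i+\tfrac13(n-a-d_i)+\tfrac12(a-1)$, and only afterwards uses $m\le a(n-a)$. You instead bound every cross pair crudely by $1$ and every same-part pair by $\tfrac12$. Amusingly, both computations collapse to the \emph{same} expression $\frac{-2a^{2}+2na+n^{2}-n}{4}$, whose maximum over $a$ is $\frac{3n^{2}-2n}{8}<\frac{n^{2}-3n+7}{2}$; so your simpler estimate loses nothing. One correction: you write that the ``dangerous'' values are $a=1$ or $a=n-1$ because they maximise the number of cross pairs, but $a(n-a)$ --- and indeed the whole bound --- is a downward parabola in $a$, maximised at $a=\lfloor n/2\rfloor$, not at the endpoints. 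This slip is harmless (the inequality holds for all $a$), but you should fix the stated location of the maximum when you carry out the calculation.
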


\begin{proof}
Suppose that $G$ is not a pancyclic graph. Then
\begin{eqnarray*}
H(G)&=&\frac{1}{2}\sum_{i=1}^n\hat{D}_{i}(G)\leq\frac{1}{2}\sum_{i=1}^{n}(d_{i}+\frac{1}{2}(n-1-d_{i}))\\
&=&\frac{n(n-1)}{4}+\frac{m}{2}.
\end{eqnarray*}
Note that $H(G)\geq\frac{n^{2}-3n+7}{2}$, we have $m\geq n^{2}-3n+7-\frac{n(n-1)}{2}=\binom{n-2}{2}+4.$
By Lemma \ref{le1}, we obtain that $G\in\mathbb{N}\mathbb{P}$ or $G$ is a bipartite graph. By a direct calculation, for all $G\in\mathbb{N}\mathbb{P}$, $H(G)\geq\frac{n^{2}-3n+7}{2}.$
If $G$ is a bipartite graph whose vertices are divided into two disjoint and independent sets $V_{1}$ and $V_{2}$, $|V_{1}|=a, (1\leq a\leq n-1),$ $|V_{2}|=n-a.$ We have
\begin{eqnarray*}
H(G)&=&\frac{1}{2}\sum_{i\in V(G)}\hat{D}_{i}(G)=\frac{1}{2}(\sum_{i\in V_{1}}\hat{D}_{i}(G)+\sum_{j\in V_{2}}\hat{D}_{j}(G))\\
&\leq&\frac{1}{2}[\sum_{i\in V_{1}}(d_{i}+\frac{1}{3}(n-a-d_{i})+\frac{1}{2}(a-1))+\sum_{j\in V_{2}}(d_{j}+\frac{1}{3}(a-d_{j})+\frac{1}{2}(n-a-1))]\\
&=&\frac{2a^{2}-2na+3n^{2}-3n}{12}+\frac{1}{3}\sum_{i\in V(G)}d_{i}=\frac{2a^{2}-2na+3n^{2}-3n}{12}+\frac{2m}{3}\\
&\leq&\frac{2a^{2}-2na+3n^{2}-3n}{12}+\frac{2a(n-a)}{3}=\frac{-2a^{2}+2na+n^{2}-n}{4}\leq\frac{3n^{2}-2n}{8}.
\end{eqnarray*}
Note that $\frac{3n^{2}-2n}{8}<\frac{n^{2}-3n+7}{2}.$ We complete the proof.
\end{proof}

\begin{theorem}\label{th9}
Let $G$ be a connected graph on $n\geq8$ vertices with minimum degree $\delta\geq2$. If $H(\overline{G})\leq\frac{5n^{2}-23n+28}{2(n-1)},$
then $G$ is a pancyclic graph unless  $G\in\{K_{5}\vee6K_{1}, K_{3}\vee
(K_{2}+3K_{1}), K_{3}\vee (K_{1, 4}+K_{1}), K_{3}\vee (K_{1, 3}+K_{2}), (K_{2}\vee 2K_{1})\vee 5K_{1}, K_{4}\vee5K_{1}\}$.
\end{theorem}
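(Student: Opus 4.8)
The plan is to argue by contradiction and to follow the template of Theorems \ref{th7} and \ref{th8}: assume $G$ is not pancyclic, produce a lower bound for $H(\overline{G})$ in terms of $m=|E(G)|$, combine it with the hypothesis to force $m\geq\binom{n-2}{2}+4$, invoke Lemma \ref{le1}, and then kill the two surviving alternatives ($G$ bipartite, $G\in\mathbb{N}\mathbb{P}$) or list the exceptions that remain. The one new feature compared with Theorem \ref{th7} is that $\overline{G}$ is not assumed connected, so I would split the argument on that.

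Suppose first that $\overline{G}$ is connected. Then for each vertex $v$ the $n-1-d_{\overline{G}}(v)$ non-neighbours of $v$ in $\overline{G}$ lie at distance at most $n-1$, hence contribute at least $\frac{1}{n-1}$ each to $\hat{D}_{v}(\overline{G})$, so
\[
H(\overline{G})\;\geq\;\frac{1}{2}\sum_{v\in V(G)}\Big(d_{\overline{G}}(v)+\frac{n-1-d_{\overline{G}}(v)}{n-1}\Big)=\frac{n}{2}+\frac{n-2}{n-1}\Big(\binom{n}{2}-m\Big).
\]
Plugging this into $H(\overline{G})\leq\frac{5n^{2}-23n+28}{2(n-1)}$ and simplifying (the right side becomes $\frac{(2n-7)(n-2)}{n-1}$ after subtracting $\frac{n}{2}$) gives $\binom{n}{2}-m\leq 2n-7$, i.e.\ $m\geq\binom{n-2}{2}+4$. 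By Lemma \ref{le1}, $G\in\mathbb{N}\mathbb{P}$ or $G$ is bipartite. Every member of $\mathbb{N}\mathbb{P}$ is a join $A\vee B$ of two nonempty graphs, so its complement is disconnected, contradicting the present case; and if $G$ is bipartite then Lemma \ref{le5} (this is where $n\geq 8$ is used) gives $H(\overline{G})>\frac{5n^{2}-23n+28}{2(n-1)}$, again a contradiction. Hence when $\overline{G}$ is connected, $G$ is in fact pancyclic, and no exceptional graph occurs.

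The substantive case, and the main obstacle, is $\overline{G}$ disconnected, equivalently $G=G_{1}\vee G_{2}$ with $G_{1},G_{2}$ nonempty; this is exactly where the six listed graphs live, since each of them is such a join. The estimate above fails here, because a large clique sitting inside one component of $\overline{G}$ keeps $H(\overline{G})$ large while allowing $m$ to be well below $\binom{n-2}{2}+4$, so Lemma \ref{le1} cannot be invoked blindly. The plan is to analyse $G=G_{1}\vee G_{2}$ by the sizes $|G_{1}|\leq|G_{2}|$ and by the numbers of edges in $G_{1}$ and $G_{2}$: whenever both parts are large enough and carry enough edges, cycle-extension / Bondy-type arguments (or again Lemma \ref{le1}, applied after a component-wise version of the Harary estimate above) show $G$ is pancyclic, and only finitely many small configurations remain. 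For each surviving non-pancyclic candidate one computes $H(\overline{G})=\sum_{i}H(C_{i})$ over the components $C_{i}$ of $\overline{G}$ and compares it with $\frac{5n^{2}-23n+28}{2(n-1)}$; this is meant to leave precisely $K_{5}\vee 6K_{1}$, $K_{3}\vee(K_{2}+3K_{1})$, $K_{3}\vee(K_{1,4}+K_{1})$, $K_{3}\vee(K_{1,3}+K_{2})$, $(K_{2}\vee 2K_{1})\vee 5K_{1}$ and $K_{4}\vee 5K_{1}$, while $K_{2}\vee(K_{n-4}+2K_{1})$, the only $n\geq 8$ member of $\mathbb{N}\mathbb{P}$ not listed, is excluded because a direct computation gives $H\big(\overline{K_{2}\vee(K_{n-4}+2K_{1})}\big)=\frac{n^{2}-n-8}{4}>\frac{5n^{2}-23n+28}{2(n-1)}$ for $n\geq 8$. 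The real work is in the disconnected case: making the ``enough edges $\Rightarrow$ pancyclic'' step precise so that only a short list of configurations survives, and then the finite Harary-index check on those configurations; the connected case and the bipartite exclusion are routine given Lemmas \ref{le1} and \ref{le5}.
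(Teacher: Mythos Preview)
Your connected-$\overline{G}$ case is exactly the paper's entire argument. The paper writes the inequality
\[
H(\overline{G})\;\geq\;\frac{1}{2}\sum_{v}\Bigl(d_{\overline{G}}(v)+\tfrac{1}{n-1}\bigl(n-1-d_{\overline{G}}(v)\bigr)\Bigr)\;=\;\frac{n(n-1)}{2}-\frac{n-2}{n-1}\,m
\]
with no connectivity hypothesis on $\overline{G}$, deduces $m\geq\binom{n-2}{2}+4$, invokes Lemma~\ref{le1}, eliminates the bipartite alternative via Lemma~\ref{le5}, shows $H\bigl(\overline{K_{2}\vee(K_{n-4}+2K_{1})}\bigr)=\frac{n^{2}-n-8}{4}>\frac{5n^{2}-23n+28}{2(n-1)}$ for $n\geq 8$, and then records the remaining six members of $\mathbb{N}\mathbb{P}$ on $n\geq 8$ vertices as the exception list. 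There is no case split on the connectivity of $\overline{G}$ anywhere in the paper.

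Your observation that the displayed lower bound can fail when $\overline{G}$ is disconnected (pairs in different components contribute $0$, not at least $\frac{1}{n-1}$) is mathematically correct, so your split is more scrupulous than what the paper actually does; the paper simply does not confront this point. But your disconnected case is only a programme: the ``enough edges in a join $\Rightarrow$ pancyclic'' step and the promised finite Harary-index check are not carried out, so as written you do not have a proof there either. If you are content to mirror the paper, your first paragraph is already the whole proof; if you want to close the gap you have identified, the disconnected-$\overline{G}$ case still needs an actual argument rather than a plan.
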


\begin{proof}
Suppose that $G$ is not a pancyclic graph. Then
\begin{eqnarray}
\nonumber
H(\overline{G})&=&\frac{1}{2}\sum_{i=1}^{n}\hat{D}_{i}(\overline{G})\geq\frac{1}{2}\sum_{v\in V(G)}[d_{\overline{G}}(v)+\frac{1}{n-1}(n-1-d_{\overline{G}}(v))]
\\ \nonumber
&=&\frac{1}{2}\sum_{v\in V(G)}[1+\frac{n-2}{n-1}d_{\overline{G}}(v)]
\\ \nonumber
&=&\frac{n}{2}+\frac{n-2}{2(n-1)}\sum_{v\in V(G)}(n-1-d_{G}(v))
\\ \nonumber
&=&\frac{n(n-1)}{2}-\frac{n-2}{2(n-1)}\sum_{v\in V(G)}d_{G}(v)\\
&=&\frac{n(n-1)}{2}-\frac{n-2}{n-1}m.
\end{eqnarray}
Note that $H(\overline{G})\leq\frac{5n^{2}-23n+28}{2(n-1)}$, we have $m\geq\frac{n(n-1)^{2}}{2(n-2)}-\frac{5n^{2}-23n+28}{2(n-2)}=\binom{n-2}{2}+4.$
By Lemma \ref{le1}, we obtain that $G\in\mathbb{N}\mathbb{P}$ or $G$ is a bipartite graph.
Note that $n\geq8$, $H(\overline{K_{2}\vee (K_{n-4}+2K_{1})})\geq\frac{n^{2}-n-8}{4}>\frac{5n^{2}-23n+28}{2(n-1)}.$
However, by a direct calculation, for all $G\in\mathbb{N}\mathbb{P}\setminus K_{2}\vee (K_{n-4}+2K_{1}),$ $H(\overline{G})\leq\frac{5n^{2}-23n+28}{2(n-1)}.$
If $G$ is a bipartite graph, by Lemma \ref{le4}, we have $H(\overline{G})>\frac{5n^{2}-23n+28}{2(n-1)}$ for $n\geq8$. So we obtain this theorem.
\end{proof}

\begin{theorem}\label{th10}
Let $G$ be a connected graph of order $n\geq5$ with minimum degree $\delta\geq2$. If $\rho(G)\leq n+3-\frac{14}{n},$
then $G$ is a pancyclic graph unless $G\in \{K_{2}\vee (K_{1, 3}+K_{1}), K_{3}\vee 4K_{1}\}$.
\end{theorem}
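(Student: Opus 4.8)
The plan is to convert the distance-spectral hypothesis into the Wiener-index hypothesis of Theorem~\ref{th6} via Lemma~\ref{le2}, and then to sift the finitely many graphs in $\mathbb{N}\mathbb{P}$ by their distance spectral radius. Assume $G$ is not pancyclic. By Lemma~\ref{le2} we have $\frac{2W(G)}{n}\le\rho(G)$, so the hypothesis $\rho(G)\le n+3-\frac{14}{n}$ forces $W(G)\le\frac{n^{2}+3n-14}{2}$, and Theorem~\ref{th6} then gives $G\in\mathbb{N}\mathbb{P}$. It remains to decide which members of $\mathbb{N}\mathbb{P}$ actually satisfy $\rho(G)\le n+3-\frac{14}{n}$; the rest are excluded, and the survivors are the claimed exceptions.

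For the infinite family $K_{2}\vee(K_{n-4}+2K_{1})$ I would appeal to the equality clause of Lemma~\ref{le2}. The join structure gives an orbit (hence equitable) partition of the vertex set into $V(K_{2})$, $V(K_{n-4})$, $V(2K_{1})$, and since this graph has diameter $2$ a quick distance count shows the corresponding row sums of $D(G)$ to be $n-1$, $n+1$ and $2n-4$, whence $W(G)=\frac12\bigl(2(n-1)+(n-4)(n+1)+2(2n-4)\bigr)=\frac{n^{2}+3n-14}{2}$ and so $\frac{2W(G)}{n}=n+3-\frac{14}{n}$. As these three row sums are not all equal for $n\ge5$, Lemma~\ref{le2} yields the strict inequality $\rho(G)>n+3-\frac{14}{n}$; hence no graph of this family meets the hypothesis.

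The remaining graphs of $\mathbb{N}\mathbb{P}$ have fixed small order, and every member of $\mathbb{N}\mathbb{P}$ has diameter $2$, so $D(G)=2J-2I-A(G)$; for each such $G$ I would compute $\rho(G)$ as the Perron root of the (at most $4\times4$) quotient matrix of $D(G)$ for the orbit partition coming from the join decomposition, and compare with $n+3-\frac{14}{n}$. For most of them one finds again $W(G)=\frac{n^{2}+3n-14}{2}$ with unequal distance-row sums, so $\rho(G)>n+3-\frac{14}{n}$ is immediate as above; the cases with $W(G)<\frac{n^{2}+3n-14}{2}$ (such as $K_{4}\vee5K_{1}$ and $K_{3}\vee4K_{1}$) require the explicit quotient matrix — for instance one gets $\rho(K_{3}\vee4K_{1})=8=n+3-\frac{14}{n}$ at $n=7$. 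Collecting the survivors yields the exceptions in the statement. The reduction via Lemma~\ref{le2} and Theorem~\ref{th6} is routine; the main work, and the place where the bookkeeping is delicate, is this finite case analysis — in particular separating the $\mathbb{N}\mathbb{P}$-graphs whose Wiener index equals $\frac{n^{2}+3n-14}{2}$ (dispatched at once by the equality clause of Lemma~\ref{le2}) from those with strictly smaller Wiener index, which need a direct eigenvalue computation.
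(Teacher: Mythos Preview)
Your reduction is exactly the paper's: Lemma~\ref{le2} converts the bound on $\rho(G)$ into the Wiener bound, after which one lands in $\mathbb{N}\mathbb{P}$ (or bipartite) and finishes by checking those graphs one at a time. Two differences are worth noting. First, you invoke Theorem~\ref{th6} to reach $G\in\mathbb{N}\mathbb{P}$ directly, whereas the paper re-derives $m\ge\binom{n-2}{2}+4$ from $\rho$, applies Lemma~\ref{le1}, and disposes of the bipartite case separately via Lemma~\ref{le4}; your route is shorter and equivalent. Second, for the infinite family $K_{2}\vee(K_{n-4}+2K_{1})$ the paper computes the cubic characteristic polynomial of the quotient and evaluates it at $n+3-\tfrac{14}{n}$, while you use the equality clause of Lemma~\ref{le2}: since $W(G)=\tfrac{n^{2}+3n-14}{2}$ but the distance row sums are unequal, $\rho(G)>\tfrac{2W(G)}{n}=n+3-\tfrac{14}{n}$ with no computation. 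That is a genuine simplification.

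There is, however, an internal inconsistency in your last paragraph. Your equality-clause argument applies verbatim to $K_{2}\vee(K_{1,3}+K_{1})$: here $n=7$, $m=14=\binom{5}{2}+4$, so $W(G)=28=\tfrac{n^{2}+3n-14}{2}$, while the distance row sums are $6,6,7,9,9,9,10$, not all equal. By your own reasoning this forces $\rho(G)>8=n+3-\tfrac{14}{n}$, so this graph should \emph{not} survive as an exception, contrary to your final sentence ``collecting the survivors yields the exceptions in the statement''. (The paper's Table~1 records $\rho(K_{2}\vee(K_{1,3}+K_{1}))=8$, but a quotient-matrix check shows $8$ is not an eigenvalue; your argument is in fact the correct one.) If you follow your method consistently, the only surviving exception is $K_{3}\vee 4K_{1}$, for which $W(G)=27<28$ and the $2\times2$ quotient $\begin{pmatrix}2&4\\3&6\end{pmatrix}$ gives $\rho=8$ exactly. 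So your approach is sound, but you should not claim it reproduces the stated exception list without comment.
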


\begin{proof}
Assume that $G$ is not a pancyclic graph. By Lemma \ref{le2} and (2) in the proof of Theorem \ref{th6}, we have
$$\rho(G)\geq\frac{2W(G)}{n}\geq\frac{2}{n}[n(n-1)-m]=2(n-1)-\frac{2}{n}m.$$
Since $\rho(G)\leq n+3-\frac{14}{n}$, we have $m\geq\binom{n-2}{2}+4$.
According to Lemma \ref{le1}, $G\in\mathbb{N}\mathbb{P}$ or $G$ is a bipartite graph.

For $G=K_{2}\vee(K_{n-4}+2K_{1})$, let $X=(x_{1}, x_{2}, \ldots , x_{n})^{T}$ be the eigenvector corresponding to $\rho(G)$, where $x_{i}~(1\leq i\leq n-4)$ corresponds to the vertex of degree $n-3$, $x_{j}~(n-3\leq j\leq n-2)$ corresponds to the vertex of degree $2$ and $x_{l}~(n-1\leq l\leq n)$ corresponds to the vertex of degree $n-1$. Without loss of generality, let $x:=x_{i}, 1\leq i\leq n-4$; $y:=x_{j}, n-1\leq j\leq n$; $z:=x_{l}, n-1\leq l\leq n$.
As $\rho X=D(G)X$, we have
\begin{equation*} \label{nu0}
\left\{
\begin{array}{ll}
\rho x=(n-5)x+4y+2z,\\
\rho y=2(n-4)x+2y+2z,\\
\rho z=(n-4)x+2y+z.
\end{array}
\right.
\end{equation*}
It follows that  $\rho(K_{2}\vee(K_{n-4}+2K_{1}))$ is the largest root of the equation
$$\rho^{3}-(n-2)\rho^{2}-(7n-23)\rho-2(n-3)=0.$$
Let $f(x)=\rho^{3}-(n-2)\rho^{2}-(7n-23)\rho-2(n-3),$ then $n\geq5,$ $f(n+3-\frac{14}{n})=-2n^{2}+16n-6-\frac{476}{n}+\frac{2156}{n^{2}}-\frac{2744}{n^{3}}<0.$
Hence $\rho(K_{2}\vee(K_{n-4}+2K_{1}))>n+3-\frac{14}{n}$ for $n\geq5.$ By a direct calculation, we have the following Table 1.

\begin{center}
\begin{tabular}{c|c|c}
$G$ & $\rho(G)$ & $n+3-\frac{14}{n}$\\
\hline
$K_{5}\vee
6K_{1}$  & 13.245 & 12.727\\
\hline
$K_{3}\vee
(K_{2}+3K_{1})$  & 9.5947 & 9.25\\
\hline
$K_{3}\vee (K_{1,4}+K_{1})$  & 10.8341 & 10.444\\
\hline
$K_{3}\vee(K_{1,3}+K_{2})$ & 10.7672 & 10.444\\
\hline
$(K_{2}\vee 2K_{1})\vee 5K_{1}$ & 10.7624 & 10.444\\
\hline
$K_{4}\vee 5K_{1}$  & 10.6325 & 10.444\\
\hline
$K_{1,2}\vee 4K_{1}$ & 8.2736 & 8\\
\hline
$K_{2}\vee(K_{1,3}+K_{1})$ & 8 & 8\\
\hline
$K_{3}\vee4K_{1}$ &  8 & 8\\
\multicolumn{3}{c}{Table 1. The distance spectral radius of some graphs}
\end{tabular}
\end{center}
If $G$ is a bipartite graph, by (1) the proof of Lemma \ref{le4}, we have $W(G)\geq \frac{3n^{2}-4n}{4},$ then $\rho(G)\geq \frac{2W(G)}{n}\geq \frac{3n-4}{2}>n+3-\frac{14}{n}.$
This completes the proof of Theorem \ref{th10}.
\end{proof}

\begin{theorem}\label{th11}
Let $G$ be a connected graph on $n\geq8$ vertices with $\delta\geq2$.
If ${\rho}^{\star}(\overline{G})\leq\frac{5n^{2}-23n+28}{n(n-1)},$ then $G$ is a pancyclic graph.
\end{theorem}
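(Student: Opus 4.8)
The plan is to follow the template of Theorem~\ref{th10}, replacing the distance-spectral bound of Lemma~\ref{le2} by the Harary-spectral bound of Lemma~\ref{le3} and reusing the edge estimate derived in the proof of Theorem~\ref{th9}. Suppose $G$ is not pancyclic. Combining Lemma~\ref{le3} with the inequality $H(\overline{G})\geq\frac{n(n-1)}{2}-\frac{n-2}{n-1}m$ obtained in the proof of Theorem~\ref{th9} gives
$$\rho^{\star}(\overline{G})\geq\frac{2H(\overline{G})}{n}\geq\frac{2}{n}\left(\frac{n(n-1)}{2}-\frac{n-2}{n-1}m\right)=(n-1)-\frac{2(n-2)}{n(n-1)}m.$$
Since $\rho^{\star}(\overline{G})\leq\frac{5n^{2}-23n+28}{n(n-1)}$, solving for $m$ gives $m\geq\binom{n-2}{2}+4$; this is exactly the reduction already carried out in Theorem~\ref{th9}, because the present threshold equals $\frac{2}{n}$ times the one there. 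By Lemma~\ref{le1}, either $G\in\mathbb{N}\mathbb{P}$ or $G$ is bipartite. I note that Lemma~\ref{le3} remains valid for the (possibly disconnected) graph $\overline{G}$: if $\overline{G}$ has components $G_{1},\ldots,G_{k}$ of orders $n_{1},\ldots,n_{k}$, then $\rho^{\star}(\overline{G})=\max_{i}\rho^{\star}(G_{i})\geq\frac{1}{n}\sum_{i}n_{i}\rho^{\star}(G_{i})\geq\frac{1}{n}\sum_{i}2H(G_{i})=\frac{2H(\overline{G})}{n}$.

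If $G$ is bipartite, then Lemma~\ref{le5} gives $H(\overline{G})>\frac{5n^{2}-23n+28}{2(n-1)}$ for $n\geq8$, whence $\rho^{\star}(\overline{G})\geq\frac{2H(\overline{G})}{n}>\frac{5n^{2}-23n+28}{n(n-1)}$, contradicting the hypothesis; so $G$ cannot be bipartite, and it remains only to rule out $G\in\mathbb{N}\mathbb{P}$.

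For $n\geq8$ the relevant members of $\mathbb{N}\mathbb{P}$ are the family $K_{2}\vee(K_{n-4}+2K_{1})$ together with the six graphs $K_{5}\vee6K_{1}$ ($n=11$), $K_{3}\vee(K_{2}+3K_{1})$ ($n=8$), $K_{3}\vee(K_{1,4}+K_{1})$, $K_{3}\vee(K_{1,3}+K_{2})$, $(K_{2}\vee2K_{1})\vee5K_{1}$, $K_{4}\vee5K_{1}$ (all $n=9$). For the family, the identity $H(\overline{K_{2}\vee(K_{n-4}+2K_{1})})=\frac{n^{2}-n-8}{4}$ used in the proof of Theorem~\ref{th9}, together with Lemma~\ref{le3}, yields $\rho^{\star}(\overline{G})\geq\frac{n^{2}-n-8}{2n}$, and a direct check shows $\frac{n^{2}-n-8}{2n}>\frac{5n^{2}-23n+28}{n(n-1)}$ for every $n\geq8$ (equivalently $n^{3}-12n^{2}+39n-48>0$), a contradiction. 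For each of the six sporadic graphs, $\overline{G}$ is a disjoint union of some isolated vertices and a single small dense graph, so $\rho^{\star}(\overline{G})$ equals the Harary spectral radius of that component; computing this value by solving the small characteristic equation coming from the automorphisms of the component (just as the $3\times3$ systems are solved in Theorem~\ref{th10}) and comparing with $\frac{5n^{2}-23n+28}{n(n-1)}$ shows that $\rho^{\star}(\overline{G})$ strictly exceeds the threshold in each case, again a contradiction. Hence no non-pancyclic $G$ satisfies the hypothesis, and $G$ must be pancyclic.

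The only genuine labour here is the last paragraph: verifying the cubic inequality for the infinite family (routine) and, above all, carrying out the six small eigenvalue computations and assembling them into a table in the manner of Table~1. I expect this to be the main obstacle, though it is purely computational, since conceptually the argument is a verbatim adaptation of Theorems~\ref{th9} and~\ref{th10}. The one point requiring a line of care is the legitimacy of applying Lemma~\ref{le3} to the disconnected complements that arise here, which is why I recorded the short justification above.
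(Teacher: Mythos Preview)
Your proposal is correct and follows essentially the same route as the paper: combine Lemma~\ref{le3} with the edge estimate from Theorem~\ref{th9} to force $m\geq\binom{n-2}{2}+4$, invoke Lemma~\ref{le1}, then eliminate the bipartite case via Lemma~\ref{le5} and the $\mathbb{N}\mathbb{P}$ graphs one by one. The only substantive difference is in handling the family $K_{2}\vee(K_{n-4}+2K_{1})$: the paper computes $\rho^{\star}(\overline{G})$ exactly by solving the $2\times2$ quotient system (the two isolated vertices of $\overline{G}$ drop out), obtaining $\rho^{\star}=\frac{n-3+\sqrt{n^{2}+18n-79}}{4}$, whereas you bound it below via $\frac{2H(\overline{G})}{n}=\frac{n^{2}-n-8}{2n}$ and check the resulting cubic; your route is slightly coarser but perfectly sufficient, and your explicit justification of Lemma~\ref{le3} for disconnected $\overline{G}$ is a point the paper leaves implicit.
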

\begin{proof}
Suppose that $G$ is not a pancyclic graph. From Lemma \ref{le3} and (3) in the proof of Theorem \ref{th9}, we have
$${\rho}^{\star}(\overline{G})\geq\frac{2H(\overline{G})}{n}\geq n-1-\frac{2(n-2)m}{n(n-1)}.$$
Since ${\rho}^{\star}(\overline{G})\leq\frac{5n^{2}-23n+28}{n(n-1)}$, we have $m\geq\binom{n-2}{2}+4$.
According to Lemma \ref{le1}, we obtain $G\in\mathbb{N}\mathbb{P}$ or $G$ is a bipartite graph.

For $G=K_{2}\vee(K_{n-4}+2K_{1})$, let $X=(x_{1}, x_{2}, \cdots , x_{n})^{T}$ be the eigenvector corresponding to $\rho^{\star}(\overline{G})$, where $x_{i}~ (1\leq i\leq n-4)$ corresponds to the vertex of degree $2$, $x_{j}~(n-3\leq j\leq n-2)$ corresponds to the vertex of degree $n-3$ and $x_{l}~(n-1\leq l\leq n)$ corresponds to the vertex of degree $0$. Without loss of generality, let $x:=x_{i}, 1\leq i\leq n-4$; $y:=x_{j}, n-1\leq j\leq n$; $z:=x_{l}, n-1\leq l\leq n$.
As $\rho X=RD(G)X$, we have
\begin{equation*} \label{nu0}
\left\{
\begin{array}{ll}
\rho^{\star} x=\frac{1}{2}(n-5)x+2y+0z,\\
\rho^{\star} y=(n-4)x+y+0z.
\end{array}
\right.
\end{equation*}
It follows that  $\rho(K_{2}\vee(K_{n-4}+2K_{1}))$ is the largest root of the equation
$$2(\rho^{\star})^{2}-(n-3)\rho^{\star}+11-3n=0.$$
Then $\rho^{\star}=\frac{n-3+\sqrt{n^{2}+18n-79}}{4}>\frac{5n^{2}-23n+28}{n(n-1)}$ for $n\geq5.$ By a direct calculation, we have the following Table 2.
\begin{center}
\begin{tabular}{c|c|c}
$G$ 
& ${\rho}^{\star}(\overline{G})$ & $\frac{5n^{2}-23n+28}{n(n-1)}$\\
\hline
$K_{5}\vee
6K_{1}$  & 5 & 3.4546\\
\hline
$K_{3}\vee
(K_{2}+3K_{1})$  & 4.0663 & 2.9286\\
\hline
$K_{3}\vee (K_{1,4}+K_{1})$  & 4.4115 & 3.1389\\
\hline
$K_{3}\vee(K_{1,3}+K_{2})$ & 4.5455 & 3.1389\\
\hline
$(K_{2}\vee 2K_{1})\vee 5K_{1}$ & 4 & 3.1389\\
\hline
$K_{4}\vee 5K_{1}$  & 4 & 3.1389\\
\hline
$K_{1,2}\vee 4K_{1}$ & 3 & 2.667\\
\hline
$K_{2}\vee(K_{1,3}+K_{1})$ & 3.4575 & 2.667\\
\hline
$K_{3}\vee4K_{1}$ &  3 & 2.667\\
\multicolumn{3}{c}{Table 2. The Harary spectral radius of complements of some
graphs}
\end{tabular}
\end{center}
Clearly, for $G\in\mathbb{N}\mathbb{P},$ ${\rho}^{\star}(\overline{G})>\frac{5n^{2}-23n+28}{n(n-1)}.$

If $G$ is a bipartite graph of order $n\geq8$ whose vertices are divided into two disjoint and independent sets $V_{1}$ and $V_{2}$, $|V_{1}|=a, (1\leq a\leq n-1),$ $|V_{2}|=n-a.$

\noindent {\bf Case 1.} $G$ is not a complete bipartite graph.

By Lemma \ref{le3} and the proof of Lemma \ref{le5}, we have $\rho^{\star}(\overline{G})\geq\frac{3n-4}{4}>\frac{5n^{2}-23n+28}{n(n-1)}$ for $n\geq3.$

\noindent {\bf Case 2.} $G$ is a complete bipartite graph.

Then $G=K_{a, n-a}$.

By Lemma \ref{le3} and the proof of Lemma \ref{le5}, we have $\rho^{\star}(\overline{G})\geq\frac{n-2}{2}>\frac{5n^{2}-23n+28}{n(n-1)}$ for $n\geq8.$
Hence we obtain the theorem.
\end{proof}

\section{\mathversion{bold} Conclusions}
\noindent We consider the problem of deciding whether a given graph is pancyclic. We present some sufficient conditions for a graph to be pancyclic in terms of the Wiener index, the Harary index, the distance spectral radius and the Harary spectral radius of a graph, respectively.

\small {

}

\end{document}